\pgfplotsset{compat=1.11}
\theoremstyle{plain}
\newtheorem{theorem}{Theorem}[section]
\theoremstyle{definition}
\theoremstyle{remark}
\title{Practical Online Assessment of Mathematical Proof}
\author{Bickerton, R. and Sangwin, C.J.}
\date{April 2020}
\begin{document}
	
	\maketitle

\begin{abstract}
We discuss a practical method for assessing mathematical proof online. 
We examine the use of faded worked examples and reading comprehension questions to understand proof. 
By breaking down a given proof, we formulate a checklist that can be used to generate comprehension questions which can be assessed automatically online. 
We then provide some preliminary results of deploying such questions.
\end{abstract}

	
	\section{Introduction}\label{sec:int}
	
	Mathematical proof is central to the discipline of mathematics, indeed it is a hallmark which differentiates mathematics from other subjects.
	Mathematical proof is also very difficult to learn.
	At the start of a university course, where proof becomes more complex and more important, students are commonly taught through an explicit {\em introduction to proof} course.
	A typical mode of teaching is to use lectures to introduce particular types of proof, and then have students solve problems and prove conjectures of a similar type.
	Ultimately, students will be expected to select the type of proof, and correctly write a complete proof of a conjecture which will be assessed.

	Assessment is a very expensive use of staff time and it is increasingly common for universities to use online assessment systems, particularly to support methods-based parts of the syllabus \cite{2013CAA}.
	Online assessment of complete arguments, including proof, is much more difficult than assessment of a final answer.
	That said, it is not clear that students are best-served by the traditional mode of teaching, particularly when regular and detailed feedback on their written work is not available.
	Instead, we are investigating alternatives.
	In particular, we consider the following practical approaches to developing online assessments designed to support formative assessments in proof-based courses.
	\begin{enumerate}
		\item Faded worked examples.
		\item Explicit assessment of {\em separated concerns}.
		\item Reading comprehension.
	\end{enumerate}
	The last topic is the largest, with a number of separate aspects.
	These include (i) preparation for a proof, (ii) reading comprehension of a particular proof, and (iii) proof followup.
	This, we believe, will better serve students than the existing common practice of demonstrating a whole proof and then expecting students to imitate something very similar.
	
	Prior research of traditional assessment has generated useful insight into students' understanding of proof.
	\cite{Davies2019} suggests students' problems include content knowledge \cite{Moore1994}; overall strategy \cite{Weber2001} and an over-reliance on inappropriate argument forms \cite{Harel2007}.
	Reading a proof, {\sl pre-se}, is not an active process.
	Learning needs carefully structured, conscious activity with effort.
	Proof comprehension tasks provide such activity and are likely to help students focus their attention.
	\cite{Hodds2014} found that students who were trained to provide self-explanations performed better on a proof comprehension test than those who were not.
	However, \cite{Hodds2014} suggested that making key ideas visible, e.g. with explicit layout, is not as important as having students engage with a proof to make sure important aspects in the proof become clear to them.
	\cite{Hodds2014} has shown that the self-explanation strategy substantially improves students' comprehension of mathematical proofs.
	Asking students for warrants, missing steps, appears to require very similar activities on the part of the student.
	
	Previous research that compared the performance of experts and non-experts has revealed some counter-intuitive results.
	For example, the {\em expertise reversal effect} refers to the finding that instruction techniques can have opposite levels of effectiveness with expert and non-expert learners \cite{Kalyuga2003,Kalyuga2012}.
	Related is the {\em worked-example effect}, which refers to the observation that novices benefit more from studying worked-examples than from independent problem-solving, \cite{Renkl2004}.
	Both of these findings have implications for how we choose to introduce students to the complex task of writing mathematical proofs.

	This is a practical paper, and so we have to consider the tools we have available to us to assess students' answers online.
	Our ultimate goal is to help students write a complete proof of a conjecture.
	We cannot currently automatically assess students' free-form proof, indeed this looks like a particularly difficult task which will remain unsolved in the foreseeable future.
	That said, we give a strong preference, where possible, to asking students to provide an answer of their own rather than using multiple choice questions (MCQ) or similar question types.
	Some issues associated with the differences between multiple choice and constructed response were discussed detail in \cite{2017MCQ}.
	
	What tools do we have available to us?
	We assume we have an online assessment system which is capable of accepting algebraic expressions from a student as an answer and establishing objective mathematical properties of those answers.  Such a system is commonplace \cite{2013CAA}.
	We also assume we are able to automatically assess algebraic derivations, such as {\em reasoning by equivalence}.
	This technology is less common than accepting a final answer, but is increasingly being used in calculus and algebra courses.
	A specific example is given in Figure \ref{fig:induction2} in which the algebra of the induction step is entered free-form by the student and automatically assessed.
	For more details of this technology see \cite{2018Sangwin-equivalence-proof}.
	
	Multiple choice questions will certainly have their place in online assessment associated with proof, but they are very difficult to write and normally require testing and refining.
	We are aware of software which will assess students' short answers, typically consisting of single sentences \cite{Butcher2010}.
	We have not, yet, used the ``pattern match'' question type developed by \cite{Butcher2010}, but the kinds of questions we would like to ask might be automatically assessed with this technology.
	Note that the pattern match question type needs a data set, and requires expertise from staff to implement patterns to be used.
	So, these questions may prove to be as ``expensive'' to develop as MCQ.

	\section{Faded worked examples}
	
	According to  \cite{Renkl2004} a ``worked example'' consists of three components: a problem formulation, the solution steps, and the final solution itself.
	{\em Classic faded worked examples} refers to a progressive sequence of worked examples in which steps within a worked example are systematically removed, requiring students to take increasing responsibility for completing the problem.
	The use of faded worked examples is a form of scaffolding, and there are many choices of what can be faded.
	For example, removing steps from the end of the problem, i.e.~first removing the last step, has been found to be most favourable for learning \cite{kinnear_george_2019_2565969}.
	For supporting cognitive skill acquisition in well-structured domains,  \cite{Renkl2004} found that it is useful to use classic faded worked examples before starting to solve problems independently.
	This is particularly useful where there is a ``model worked solution'' which a student is expected to learn.
	
	This technique can be applied when teaching particular types of proof, e.g. when teaching proof by induction.
	However, this form of scaffolding intentionally removes much of the decision making from students.
	That is to say, the decision to provide a template removes the responsibility from the student to decide to use a proof by induction.
	This will reduce the cognitive load on a student, which might prove very helpful in an early formative situation.
	
	Figure \ref{fig:induction} shows an induction question in the STACK online assessment system, in which there is a lot of scaffolding.
	The left hand figure shows the blank question initially presented to the student.
	Note that one of the input boxes contains a ``syntax hint'' to reduce the difficulty of turning sigma notation such as \(\sum_{k=1}^n k^2\) into a linear typed expression \verb$sum(k^2,k,1,n)$.
	The right hand figure shows a completed student response, without any feedback about correctness.
	Typed expressions are displayed to the student in traditional notation, before they submit any response.
	
	\begin{figure}
		\begin{center}
			\includegraphics[width=8cm]{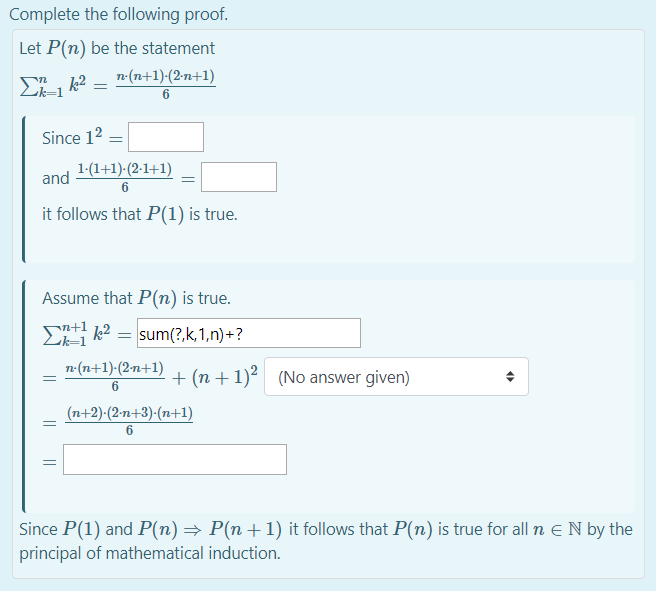}
			$~$
			\includegraphics[width=8cm]{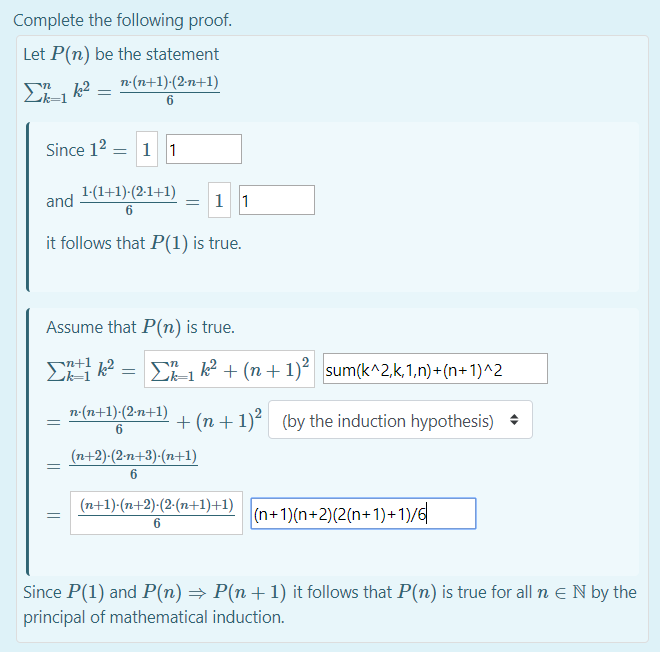}
			\caption{An induction problem with scaffolding, blank (left) and with a student's answers (right)}\label{fig:induction}
		\end{center}
	\end{figure}
	
	Here, almost all decisions have been made for the student, and the structure of the proof is essentially complete.
	In this case the student is only responsible for some of the algebraic steps, and for justifying one step with a multiple choice option.
	It is, within current online assessment technology, possible to assess line-by-line algebraic working.
	For this type of proof by induction, a student needs to know that they should be aiming to algebraically manipulate the formula to make it clear we have the right hand side of \(P(n+1)\) when written as a function of \(n+1\).
	In Figure \ref{fig:induction2}, the student can take any algebraic steps they please, and the software will establish (i) algebraic equivalence of adjacent steps, and (ii) that the last step is written in the correct form needed to show \(P(n+1)\) really does hold.
	While the ultimate goal of most university proof courses would be to have the student write the whole proof, we believe there is real merit in practice in more structured situations for novice students.
	
	\begin{figure}
		\begin{center}
			\includegraphics[width=9cm]{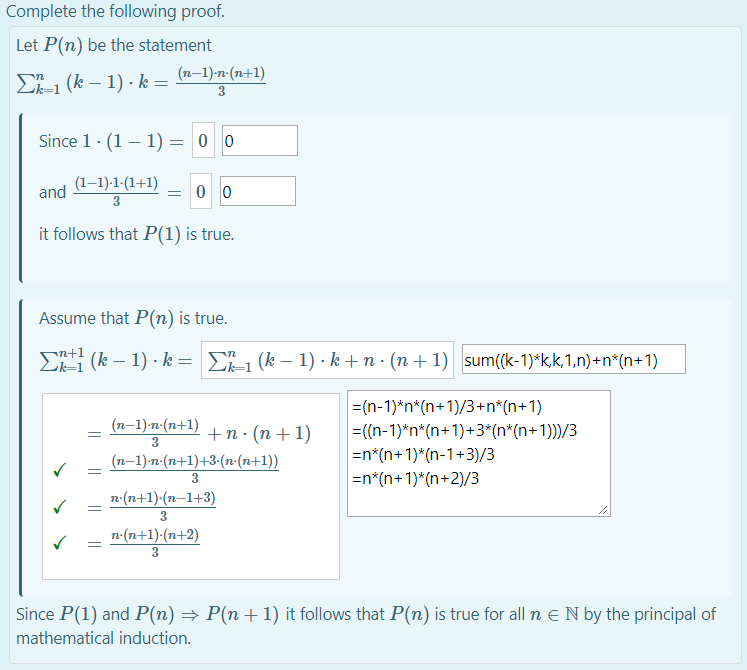}
			\caption{An induction problem in which students are responsible for the algebraic decision making}\label{fig:induction2}
		\end{center}
	\end{figure}
	
	\section{Separating concerns}
	
	A mathematical proof is typically a rather complex mix of formal statements, definitions, logical relationships and calculations.
	In a faded worked example, the worked solution is provided which students are expected to complete this.
	An alternative is to provide explicit practice of specific concerns which will very shortly occur in a proof.
	Much of this practice can be done online, even when students will be writing proofs on paper in a traditional way.
	
	For example, novice students typically struggle with \(\sum\)-notation in fundamentally mathematical ways.
	More specifically, students confuse and confound \(\sum_{k=1}^n n\) with \(\sum_{k=1}^n k\), or they do not notice the differences between
	\[
	\sum_{k=1}^n a_k,\
	\sum_{k=0}^n a_k,\
	\sum_{k=1}^{n+1} a_k \mbox{ and }
	\sum_{k=0}^{n+1} a_k,
	\]
	where the action takes place in the limits of the summation.
	A further problem is a general confusion of the status of local and global variables, and a lack of confidence over whether \( \sum_{k=1}^n a_k= \sum_{m=1}^n a_m = \sum_{m=0}^{n-1} a_{m+1}\).
	
	Other examples of specific concerns which can be separated include the following.
	\begin{itemize}
		\item Negation of logical statements, including quantified statements, e.g. those with \(\forall\) or \(\exists\).
		\item Re-writing expressions to make $n+1$ the variable in proof by induction, including just writing \(P(n+1)\).
		\item Algebraic manipulation of expressions with inequalities and absolute values in preparation for an \(\epsilon/\delta\)-arguments in analysis.
		``Secret working'', e.g. reverse engineering the \(\epsilon/\delta\)-argument.
	\end{itemize}
	An alternative to using extensive scaffolding in examples, such as those shown in Figure \ref{fig:induction}, is to explicitly {\em separate concerns}.
	Separating concerns refers to explicitly identifying, teaching and assessing specific topics in relative isolation in anticipation of their immanent need.
	Indeed, \cite{Osterholm2006} found that mathematics did not appear to be the most dominant factor affecting reading comprehension.
	Instead, the use of symbols was more relevant, suggesting explicit attention to symbolism should be given particularly when any new symbolism is introduced.
	
	An obvious danger with separated concerns is the lack of obvious motivation for a topic, or particular type of calculation.
	Indeed, one difficulty is knowing which concerns to separate in preparation for a particular topic.

	\section{Terminology: proof-gadgets and steps}\label{sec:term}

	Having names for things is important, especially when talking about them.
	For example, the word ``ansatz'' is widely used as ``an educated guess or an additional assumption made to help solve a problem, and which is later verified to be part of the solution by its results'' (Wikipedia).
	Many simple proofs by contradiction could be a contrapositive instead, see \cite{2018-Contrapositive} for a discussion.
	Having a word ``contrapositive'' is very helpful when discussing such proofs, and explaining to students the difference between contradicting the hypothesis, and a general external contradiction such as \(1=0\).
	
	Mathematical theorems can be divided into two classes: specific and general.
	Specific theorems concern one object, or a unique situation.
	For example, the following classic proofs are all specific: (i) there are infinitely many prime numbers, (ii) the real numbers are uncountable, and (iii) \(\sqrt{2}\not\in\mathbb{Q}\).
	General theorems have hypotheses which a range of examples do/do not satisfy, e.g.
	\begin{theorem}
		\label{th:bounded-inc-converge}
		If \( (a_n) \) is a bounded and increasing sequence then \( \lim_{n\rightarrow\infty} a_n\) exists.
	\end{theorem}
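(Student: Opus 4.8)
The plan is to invoke the completeness of the real numbers to produce a candidate limit, and then verify the \(\epsilon\)-definition of convergence directly. First I would observe that the hypothesis ``bounded'' gives, in particular, that the set \(S = \{a_n : n\in\mathbb{N}\}\) is nonempty and bounded above. By the least upper bound property---the completeness axiom for \(\mathbb{R}\)---this set possesses a supremum; call it \(L = \sup S\). My conjecture is that \(L\) is exactly the limit we seek, so the remaining work is to confirm that the sequence actually converges to this value.

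To verify \(\lim_{n\rightarrow\infty} a_n = L\), I would fix an arbitrary \(\epsilon > 0\) and aim to find a threshold \(N\) beyond which every term lies within \(\epsilon\) of \(L\). The key step uses the defining property of the supremum: since \(L - \epsilon\) is strictly less than \(L\), it cannot be an upper bound for \(S\), so there must exist an index \(N\) with \(a_N > L - \epsilon\). This is where the increasing hypothesis does its work. For every \(n \geq N\) monotonicity gives \(a_n \geq a_N > L - \epsilon\), while the fact that \(L\) is an upper bound of \(S\) gives \(a_n \leq L < L + \epsilon\). Combining these two inequalities yields \(|a_n - L| < \epsilon\) for all \(n \geq N\), which is precisely the definition of convergence to \(L\).

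The main obstacle is not the computation---the \(\epsilon\) argument above is routine---but rather the conceptual point that completeness is unavoidable. The same statement fails over \(\mathbb{Q}\): the increasing, bounded sequence of decimal truncations of \(\sqrt{2}\) has no rational limit, so any correct proof must appeal to a property that distinguishes \(\mathbb{R}\) from \(\mathbb{Q}\). Recognising that the supremum is the right object to name, and that its existence is exactly the hypothesis we are permitted to assume, is the genuine content of the argument; everything afterwards is a direct unwinding of definitions.
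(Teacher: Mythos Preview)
Your proof is correct and follows essentially the same route as the paper's: both construct the set \(S=\{a_n:n\in\mathbb{N}\}\), invoke the Completeness Axiom to obtain \(L=\sup S\), and then verify the \(\epsilon\)-definition of convergence by using that \(L-\epsilon\) is not an upper bound together with monotonicity. The only differences are cosmetic (strict versus non-strict inequalities, and your added remark that the result fails over \(\mathbb{Q}\)), so there is nothing substantive to compare.
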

	This is a general theorem, with three definitions in play, namely (i) bounded, (ii) increasing, and (iii) convergent.
	There is more scope in general theorems for students to work with examples.
	Can students correctly identify which of the three properties the example satisfies?
	More challenging, can they provide their own examples?
	In the context of Theorem \ref{th:bounded-inc-converge} we can have a convergent sequence which is not increasing, but cannot have a convergent sequence which is not bounded.  Do students understand this significant difference between the two hypotheses?
	
	When proving a theorem it is common to construct some kind of object and then prove this has certain properties. E.g.~in the standard proof that there are infinitely many primes we assume the primes are finite \(p_1,\cdots, p_n\) and then consider \(n=p_1\times \cdots \times p_n +1\).  We then establish properties of \(n\) which lead to a contradiction.
	There does not appear to be a commonly used general name for such objects.
	In this paper will use the word {\em gadget}, or {\em proof-gadget} for emphasis, for a particular object constructed as a device within a proof, built to establish certain conditions must hold.
	The word gadget is often used to refer to a device, e.g. mechanical or electrical, with ingenious, novel and practical aspects.
	
	Similarly, some methods and techniques make use of what \cite{Pointon02} termed {\em facilitator objects}.  For example, in an \(\epsilon/\delta\)-argument the goal is to show that \(\forall\epsilon >0\exists \delta: \cdots\).
	Essentially, the proof must start ``Let \(\epsilon>0\) and take \(\delta = f(\epsilon)\), then ... ''.
	We consider defining the facilitator object \(f\) in \(\delta = f(\epsilon)\) to be a particular form of proof-gadget, worthy of the separate name.
	While the proof is typically presented in a finished form, to write the proof the function \(f\) has to be found by reverse-engineering, using {\em hidden working}.
	For proofs and techniques which make use of hidden working the teacher could choose to separate this concern as an explicit exercise, prior to the formal proof being written.  
	
	It is common to talk about {\em steps in the proof} in a somewhat loose sense.
	A mathematical proof is written as an ordered sequence of statements.
	English sentences contains one or more statements, and statements can be highly abbreviated by containing mathematical symbols.
	For example, the symbol $=$ was consciously introduced as a synonym {\em ``to avoid the tedious repetition of the words `is equals to'.''} \cite{Recorde1557}.
	Nearly all modern notation abbreviates, and nearly all notation can be read as part of a sentence.
	This does not help us understand what a mathematical ``step'' might be.
	
	A commonly used model for general arguments was devised by Toulmin \cite{Toulmin2003} and used in mathematics education \cite{Inglis2007}.
	Toulmin's scheme has six components.
	Data (D) is evidence on which the claim is based.
	The conclusion (C) is the actual claim which is being put forward.
	The warrant (W) gives the justification for deriving the conclusion from the data.
	The backing (B) are ``other assurances, without which the warrants themselves would posses[sic] neither authority nor currency'' \cite[p.~96]{Toulmin2003}.
	A qualifier (Q) gives the degree of confidence in the claim.
	Lastly, a rebuttal (R) gives circumstances in which the claim might not hold.
	These items are arranged in what Toulmin calls the {\em ``argumentation pattern'' } shown in Figure \ref{fig:Toulmin}.
	
	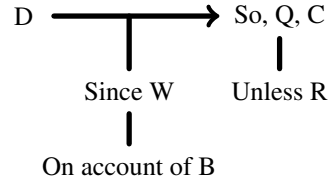
\begin{figure}
		\centering
		\begin{tikzpicture}[line cap=round,line join=round,x=2.0cm,y=2.0cm]
		\draw (0.3,1) node {D};
		\draw [->,line width=2.pt] (0.5,1.) -- (1.6,1.);
		\draw (2,1) node {So, Q, C};
		\draw (1,0.5) node  {Since W};
		\draw (1,0) node {On account of B};
		\draw (2,0.5) node {Unless R};
		\draw [line width=2.pt] (1.,1.)-- (1,0.65);
		\draw [line width=2.pt] (1,0.35)-- (1,0.15);
		\draw [line width=2.pt] (2,0.85)-- (2,0.65);
		\end{tikzpicture}
		\caption{Toulmin's argumentation pattern}
		\label{fig:Toulmin}
	\end{figure}
	
	Toulmin's scheme is not applied to a whole proof, and we do not propose to apply the scheme only to adjacent statements within a proof.
	Rather we acknowledge that a typical mathematical proof is a nested recursive structure.
	Toulmin's scheme will be applied both to adjacent statements and to ``blocks'' which make up the internal structure of the proof.
	Consider the following theorem, and its proof.
	
	\begin{theorem}
		If \(a+b\sqrt{2}=c+d\sqrt{2}\) and \(a,b,c,d \in \mathbb{Q}\) then \(a=c\) and \(b=d\).
	\end{theorem}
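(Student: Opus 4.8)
The plan is to reduce the claim to the irrationality of \(\sqrt{2}\), which is one of the classic specific results already noted in the excerpt, namely that \(\sqrt{2}\notin\mathbb{Q}\). First I would rearrange the hypothesis \(a+b\sqrt{2}=c+d\sqrt{2}\) by collecting the rational parts on one side and the \(\sqrt{2}\) parts on the other, obtaining \(a-c=(d-b)\sqrt{2}\). Since \(a,b,c,d\in\mathbb{Q}\), both \(a-c\) and \(d-b\) are rational, so this single identity encodes everything we need.

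Next I would argue by cases on whether \(d-b=0\). If \(d-b\neq 0\), then I can divide to obtain \(\sqrt{2}=(a-c)/(d-b)\). The right-hand side is a quotient of rationals with non-zero denominator, hence itself rational, so this would force \(\sqrt{2}\in\mathbb{Q}\), contradicting the known irrationality of \(\sqrt{2}\). Therefore the assumption \(d-b\neq 0\) is untenable, and we must have \(d-b=0\), i.e.\ \(b=d\).

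Finally, substituting \(b=d\) back into \(a-c=(d-b)\sqrt{2}\) gives \(a-c=0\), so \(a=c\), and both required equalities follow.

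The main obstacle---indeed essentially the only subtlety---is the need to avoid dividing by zero, which is precisely why the case split on \(d-b=0\) is essential: without it the step that solves for \(\sqrt{2}\) is not valid. Conceptually the result is just the statement that \(1\) and \(\sqrt{2}\) are linearly independent over \(\mathbb{Q}\), and the proof is otherwise a direct consequence of \(\sqrt{2}\notin\mathbb{Q}\).
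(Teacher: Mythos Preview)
Your proposal is correct and follows essentially the same route as the paper: rearrange to \((a-c)=(d-b)\sqrt{2}\), use the irrationality of \(\sqrt{2}\) to rule out \(b\neq d\) (since otherwise \(\sqrt{2}\) would be a ratio of rationals), and then read off \(a=c\). The paper in fact gives two presentations of this argument, and your version---rearranging first and then splitting explicitly on whether \(d-b=0\)---matches the second, more structured one almost exactly.
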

	
	\begin{proof}
		Suppose (for a contradiction) that \(b\neq d\). If \(a+b\sqrt{2}=c+d\sqrt{2}\) then, rearranging, we have
		\((a-c)=(d-b)\sqrt{2}\).
		Dividing gives \( \sqrt{2}=\frac{a-c}{d-b} \in \mathbb{Q}\).
		But [as previously proved] \(\sqrt{2}\not\in\mathbb{Q}\).
		This is a contradiction, so \(b=d\).
		Then setting \(b=d\) in \(a+b\sqrt{2}=c+d\sqrt{2}\) it follows \(a=c\).
	\end{proof}
	
	The following proof has a more structured presentation.
	
	\begin{proof}
		Assume  \(a+b\sqrt{2}=c+d\sqrt{2}\) and \(a,b,c,d \in \mathbb{Q}\).
		Then
		\begin{align*}
			a+b\sqrt{2}=c+d\sqrt{2} \\
			\Leftrightarrow (a-c)=(d-b)\sqrt{2}.
		\end{align*}
		\begin{enumerate}
			\item If \(b\neq d\) then \( \sqrt{2}=\frac{a-c}{d-b}\).  Note that since \(a,b,c,d \in \mathbb{Q}\) it follows \(\frac{a-c}{d-b}\in\mathbb{Q} \).  But [as previously proved] \(\sqrt{2}\not\in\mathbb{Q}\).
			This contradicts the assumption \(b\neq d\).
			\item If \(b=d\) then \((a-c)=0\), i.e.~\(a=c\), and the theorem holds.
		\end{enumerate}
		The only case which holds is \(b=d\) and so \(a=c\).
	\end{proof}
	Notice the second proof has the following nested structure.
    \begin{quote}
	    Equivalence reasoning.\\
	    Cases:
	    \begin{itemize}
	        \item \(b\neq d\): Contradiction.
	        \item \(b= d\): Direct proof.
	    \end{itemize}
     \end{quote}
	The proof contains some direct reasoning by equivalence before exhaustive cases on the equality of \(b\) and \(d\).
	The case \(b\neq d\) contains a contradiction, so cannot occur, leaving only the case \(b=d\).
	However, the contradiction is to the hypothesis of the sub-proof \(b\neq d\).
	An alternative proof by contradiction of the overall conclusion needs the hypothesis ``\(a\neq c\) or \(b\neq d\)'' and still requires exhaustive cases.
	
	
	There is still considerable philosophical discussion about the nature of legitimate mathematical arguments, and how to analyse proof.
	We will use the phrases ``data'', ``conclusion'' and ``warrant'' as in the Toulmin scheme.
	Warrants will apply to adjacent statements, e.g.~explicitly using a re-write rule such \(a=b \Leftrightarrow a+x=b+x\) when reasoning by equivalence.
	Warrants will apply more generally to statements within a proof, e.g. using properties of a definition.
	Backing will refer to more general properties external to a proof, e.g. the legitimacy of a type of proof, such as the equivalence of a statement and its contrapositive, or to statements it would be inappropriate to justify in detail in this context, such as that every integer is either odd or even.
	Note that the qualifier and rebuttal are often omitted when discussing formal mathematical arguments, see \cite{Inglis2007}.

	\section{Reading comprehension}
	
	Traditional assessment requires students to write proofs of their own, and it appears to us that it is only comparatively rarely that students are asked reading comprehension questions.
	In response \cite{Mejia-Ramos2012} identified the following seven aspects of proof comprehension.
	The first three are {\em local} and the last four consider {\em global} concerns.
	\begin{enumerate}
		\item {\em Logical status of statements and proof framework.}
		The phrase {\em proof framework} was also used by \cite{Selden1995}.
		\item  {\em Meaning of terms and statements within the proof.}
		This specifically includes understanding, and being able to use, mathematical definitions. \cite{Moore1994}.
		\item {\em Justification of claims.}
		\item {\em Summarizing via high-level ideas.}
		\item  {\em Identifying the modular structure.} Mathematical proof often has a recursive structure, with local arguments within a larger global argument.
		\item  {\em Transferring the general ideas or methods to another context.}
		\item {\em Illustrating with examples.}
	\end{enumerate}
	However, these all refer to reading comprehension of a particular proof.
	In practice, we suspect that a slightly broader view is needed.
	Indeed, this is acknowledged in the headings {\em Transferring the general ideas or methods to another context} which looks outside the current proof.
	
	\cite{Weber2015b} investigated reading strategies that students can use to productively engage with a proof.  Derived from interviews with students he reports the following strategies.
	\begin{enumerate}
		\item[1a] Understanding the theorem by rephrasing it in one’s own words
		\item[1b] Understanding the theorem by expressing it in logical notation
		\item[2] Trying to prove a theorem before reading its proof
		\item[3] Considering the proof framework used in the proof
		\item[4] Partitioning the proof into parts or sub-proofs
		\item[5] Checking confusing inferences with examples
		\item[6] Comparing the method used in the proof with one’s own approach
	\end{enumerate}
	\cite{Weber2015b} claims that a contribution of his paper is {\em ``the suggestion that students' understanding of the proofs that they read can be improved if they can be taught to apply the strategies''} he reports.
	Weber draws an interesting comparison with attempts, such as those of \cite{Polya54}, to teach problem-solving heuristics.
	
	\subsection{Proof understanding baseline checklist}
	
	Based on the seven aspects of proof comprehension identified by \cite{Mejia-Ramos2012}, on the strategies reported in \cite{Weber2015b}, our own experience as mathematics researchers and teachers, and other sources we propose a {\em Proof understanding baseline checklist} shown in Figure \ref{fig:pubc}.
	
	\begin{figure}
		\begin{enumerate}
			\item Which formal definitions are relevant to the proof?
			\begin{enumerate}
				\item What specialist notation is used, and what does it mean?
				\item Write out definitions which occur in the hypotheses, conclusion and proof, adopting the current notation.
				\item What examples do you know which do/do not satisfy the definitions?
			\end{enumerate}
			\item Describe the overall modular recursive structure of the proof.  E.g.~(i) direct, (ii) definition-chasing, (iii) if and only if, (iv) exhaustive cases, (v) induction, (vi) contrapositive, (vii) contradiction.
			\begin{enumerate}
				\item Identify each structural part of the proof separately. (e.g. an ``if any only if'' proof must have both directions, induction must have (i) a clear hypothesis statement, (ii) base case, (iii) induction step, (iv) conclusion).
				\item Recursively apply the proof understanding baseline checklist to each separate sub-part.
			\end{enumerate}
			\item Hypotheses
			\begin{enumerate}
				\item Where is each hypothesis used in the proof?
				\item In a general proof, which examples do/do not satisfy the hypotheses?  If there is more than one hypothesis, do we have examples which satisfy each logical combination?
			\end{enumerate}
			\item Is a correct warrant justifying each step in the proof given?  If not then provide one.
			\item Does the proof make use of previously known theorems or results? If so, what are they and how are they used?
			\item Does the proof make use of proof-gadgets?  If so, what are they and how are they used?
			\item For an if ... then proof, is the converse true or false?  Do we have counter-examples?
			\item In a general proof, can you follow the proof steps through with a simple specific example, including any proof-gadgets?
		\end{enumerate}
		\caption{Proof understanding baseline checklist}
		\label{fig:pubc}
	\end{figure}
	
	Unlike a pilot's pre-fight checklist, the items listed in the proof understanding baseline checklist are not intended to be used precisely in the order they are written above.
	Indeed, we expect that typically a proof will need to be read a number of times, \cite{Inglis2012}.
	Conversely, in some proofs, the items might be rather trivial and so will not need detailed comment.
	This is an example of where the expert-reversal effect might come into play.
	An experienced mathematician will bring examples to mind, will instinctively search for warrants and will be confident using formal definitions.
	Novices will need prompting, and the proof understanding baseline checklist might be able to help here.
	One hallmark of expertise is knowing the appropriate level of detail, e.g. being able to highlight the distinctive aspects of a particular proof compared to a typical poof in a particular field of mathematics.
	Providing comprehensive examples at various stages (hypotheses, counter example to the converse) might only be appropriate once the overall structure has been appreciated.
	So, the checklist is not intended to be used in a linear fashion by students.
	
	The proof understanding baseline checklist is about a particular proof, and so the checklist consciously neglects some things identified by \cite{Mejia-Ramos2012}, such as transferring the general ideas to another context, or comparing different proofs.
	
	We conjecture that an efficient way to remember many proofs appears to be to identify the overall modular recursive structure of the proof, and how to make use of any gadgets.
	A hallmark of competence would be the ability to reconstruct a substantially complete proof from a statement containing just (i) the overall modular recursive structure of the proof, and (ii) definition and brief statement of how to use any gadgets.
	Indeed, this observation might provide an opportunity for faded worked examples of proofs at a more structural level.
	That is, developing tasks in which students are provided with a proposed structure and the details of a gadget.
	Their task would be to complete a proof.
	
	We anticipate that the proof understanding baseline checklist could be used by students independently. However for many proofs, items within the checklist will be rather trivial, or irrelevant.
	When a teacher creates a set of proof-comprehension questions we expect there will be some craft and artistry in choosing and sequencing the questions which highlight distinctive and interesting features unique to a particular proof.
	
	\section{Proof Comprehension Exercises}\label{sec:dpce}
	
	\cite{Mejia-Ramos2012} developed a proof-comprehension framework, and applied this in \cite{Mejia-Ramos2017} to develop and validate three multiple choice reading comprehension tests.
	They argue that their model {\em ``could also be helpful for professors who teach advanced mathematics courses.''}
	Their process involved the following six stages, see \cite[Table 1]{Mejia-Ramos2017}.
	\begin{enumerate}
		\item Generate open-ended items, covering all aspects of the {\em proof-comprehension framework.}
		\item Conduct pre-test interviews with 12 students, asking them to answer each item generated in stage 1. to generate the choices for multiple-choice tests.
		\item Expert review, and re-draft, of items generated after stage 2.
		\item Conduct a pilot with a further 12 students, asking them to solve problems and think aloud.
		\item Administer the test to a large population (approximately 200 students) to verify that these tests had high internal reliability, to identify problematic items with poor discriminatory power, and to identify items that can be removed to generate shorter multiple-choice tests.
		\item Conduct validating interviews with 12 students, asking them to answer each item generated in stage 5. to verify that the final, shorter, multiple-choice tests accurately measured students' understanding.
	\end{enumerate}
	This process represents a ``gold standard'' for test development, but the effort and resources needed is probably only available in a relatively limited range of situations such as in research projects, or for high-stakes national examinations.
	A similar process is used to develop and validate concept inventories, see \cite{Carlson2010} and \cite{Lane-getaz2013}.
	
	Typical university teachers do not have the resources available to research projects and normally need to generate items on a week-by-week basis.
	So, our original goal was to develop a ``practical taxonomy of question types'' for generating questions which support learning mathematical proof, including reading comprehension.
	We want to equip a thoughtful, practical, teacher with concrete ways to develop assessments of proof, for use as online assessments, of any proof they might want to teach as a routine part of their teaching.
	So, our starting point was the following, abbreviated, process.
	\begin{enumerate}
		\item A member of staff takes the proof to be taught and develops a reading comprehension test.
		\item The test is taken by students, as part of weekly teaching processes.
		\item The teacher evaluates the test, qualitatively and perhaps using some statistics such as item response theory, refines the questions and weeds out some of the poorly performing items.
	\end{enumerate}
	Note that stage 3 of our process it typically not formally undertaken even with relatively large university groups.
	Our process is ambitious and can’t possibly result in the kind of quality instruments developed for research by \cite{Mejia-Ramos2017}.
	But, we have good reason to think it could result in a substantial improvement on current practice.
	The goal of this section is to detail a streamlined, practical process which university teachers can follow in order to quickly create proof comprehension questions for any proof they wish to teach.

	Since we expect students to use the proof understanding baseline checklist shown in Figure \ref{fig:pubc}, an obvious starting point is to create questions directly testing aspects of the checklist.  For example, related to the logical status of statements and proof framework we can always consider asking the following.
	\begin{enumerate}
		\item What is the type of proof?
		\item Identify the lines in the proof in which each hypotheses is used.
		\item Provide a justification for a particular statement/step.
		\item Identify the lines which play particular roles in the proof structure, e.g.~where is the induction hypothesis used in the induction step? E.g. where is the conclusion of the ``if then'' direction, and hypothesis of the ``only if''?
	\end{enumerate}
	Similarly, we expect to ask about definitions, and examples of objects which may satisfy the definitions.
	\begin{enumerate}
		\item Can students identify the correct definition?
		\item Can students choose examples (from a list, say) which satisfy one of the relevant definitions?
		Can we find examples for which each combination of hypotheses holds or fails to hold?  If there are $n$ properties (hypotheses) then there will be $2^n$ examples.
		\item Can students illustrate a statement in a particular case?
		We asked our students to write out \(P(3)\) in the induction proof shown in Figure \ref{fig:induction}.
		Of the 350 attempts online, 26\% of students wrote only \(\frac{3(3+1)(2\times 3+1)}{6}\), confusing the {\em equation} \(P(3)\) with the value of the sum of the series.
		Illustrating specific statements with an example helps to discover any confusion about the meaning of notation, terms and statements in the proof.
	\end{enumerate}
	
	\section{Developing a proof comprehension question}
	
	To develop our thinking of practical proof assessment, we set ourselves the goal of writing at least one proof comprehension question sequence each week in a ``proofs and problem solving'' course.
    ``Proofs and problem solving'' is a year 1 course is taken by  approximately 400 undergraduate students, most of whom are taking mathematics degrees.
	
	In this section we illustrate the ideas discussed by recording our development and use of an example proof comprehension exercise for Theorem \ref{th:bounded-inc-converge}.
	The following is the proof of Theorem \ref{th:bounded-inc-converge} given in \cite[p.~213]{Liebeck2000}, the course textbook for our year 1 course.
	
	\begin{proof}
		Since \((a_n)\) is bounded, the set \(S = \{a_n \: | \: n \in \mathbb{N}\}\) has an upper bound.
		Hence by the Completeness Axiom for \(\mathbb{R}\) (see Chapter 22),  \(S\) has a least upper bound, which we call \(l\). We prove that \(l\) is the limit of \((a_n)_{n=1}^\infty\).
		Let \(\varepsilon \geq 0\).
		Then \(l - \varepsilon\) is not an upper bound for the set \(S\), so there exists an \(N\) such that \(a_N \geq l - \varepsilon\).
		As \((a_n)\) is increasing this implies that \(a_n \geq a_N \geq l - \varepsilon\) for all \(n \geq N\).
		Also, \(l\) is an upper bound for \(S\), so \(a_n \leq l\) for all \(n\).
		We conclude that \(l - \varepsilon \leq a_n \leq l\) for all \(n \geq N\), which means that \(|a_n - l| \leq \varepsilon\) for all \(n \geq N\).
		This shows that \(a_n \to l\).
	\end{proof}
	
	Based on our experiences thus far, we suggest the following factors should be considered when turning such a proof into a reading comprehension exercise.
	
	\subsection{Selecting a Proof}\label{subsec:ps}
	
	There are many reasons why a theorem might be chosen for a proof comprehension question.
	Mathematicians do on occasion talk about the relative importance of particular proofs relative to the discipline as a whole, e.g. \cite{Aigner2004}.
	\cite{Mejia-Ramos2017} suggested that their proofs were chosen to maximize the utility of the corresponding proof comprehension tests.
	More specifically we considered the following factors.
	We require a theorem that is suitable for being taught in our course and is something we would like students to understand.
	Is the proof particularly important in some absolute cultural sense?
	For example, speaking personally, we would like our students to remember the proofs of the following theorems (from our course) long after they graduate, (i) proof of infinitely many primes, (ii) Cantor's diagonal argument and (iii) the proof that \(\sqrt{2}\) is irrational.
	In recalling the proof we would like students to fully understand the details of these proofs.
	It is interesting to note that \cite{Mejia-Ramos2017} selected the first two of these theorems as material for their proof comprehension tests.
	
	Is the proof in a style which is applicable in many situations?
	For example, our goal is that students should learn induction, contradiction and so on.
	Perhaps the proof makes use of important definitions, which are applicable in many situations?
	For example, \(\epsilon/\delta\) arguments appear to require concerted effort for most people to learn.
	
    When writing materials for online assessment we are also driven, in part, by practicality.
	For example, we have a preference for relatively short proofs which will comfortably fit onto a single screen for most users.
	The types of comprehension questions being asked should also be considered when making such a choice.
	Not every proof will naturally yield questions that assess all aspects of proof comprehension, indeed if they did we would have far too many questions.
	For example, specific theorems appear to have fewer opportunities to ask about related examples.
	
	Since proof can be written in different ways, see \cite{Ording2019} for examples, the style of proof should also be considered.
	Different styles of writing can affect the suitability of the proof for various types of proof comprehension.
	
	Our choice of Theorem \ref{th:bounded-inc-converge} was made by following these principles.
	It is a proof that students meet in our course.
	The proof is short, and the result is useful in a practical sense.
	The proof involves important definitions, and the structure of this proof is used elsewhere in analysis.
	Theorem \ref{th:bounded-inc-converge} is a general theorem, involving a number of definitions, with scope for interesting use of examples.
	
	\subsection{Simplifying and Examining the proof}\label{subsec:psimp}
	
	Since our goal is to write proof comprehension questions that can be marked using an online assessment system, it is useful to separate out the steps of the proof and to number them.
	This has the effect of simplifying the layout of the proof whilst providing a way that specific steps of the proof can be referred to and marked online.
    For example, we can re-write the proof of Theorem \ref{th:bounded-inc-converge} as follows.
	
	\begin{proof}$~$\\
		1. Since the sequence \((a_n)\) is bounded, the set \(S = \{a_n \: | \: n \in \mathbb{N}\}\) has an upper bound.
		
		2. Note that \(S\) has a least upper bound, which we call \(l\).
		
		3. We prove that \(l\) is the limit of \((a_n)_{n=1}^\infty\).
		
		4. Let \(\varepsilon \geq 0\) then \(l - \varepsilon\) is not an upper bound for the set \(S\), so there exists an \(N\) such that \(a_N \geq l - \varepsilon\).
		
		5. As \((a_n)\) is increasing this implies that \(a_n \geq a_N \geq l - \varepsilon\) for all \(n \geq N\).
		
		6. Also, \(l\) is an upper bound for \(S\), so \(a_n \leq l\) for all \(n\).
		
		7. Thus, \(l - \varepsilon \leq a_n \leq l\) for all \( n \geq N\), which means that \(|a_n - l| \leq \varepsilon\) for all \(n \geq N\).
		
		8. Therefore this shows that \(a_n \to l\).
	\end{proof}
	
	Numbered lines allows us to ask students about specific statements in the proof without ambiguity.
	Writing statements separately has the added advantage that it somewhat compartmentalises the arguments in the proof and makes it easier to pick out specific items that make up the entire argument.
	If we we wish to test students' understanding of a proof we often mean that we wish to test whether they have understood a more specific idea within the proof.
	For example in our choice of proof, do we wish to know if a student remembers the technique of transferring the argument from sequences to sets as the primary way to apply the completeness hypothesis?
	Is this an exercise in using the formal definition of convergence in a general setting, rather than proving a particular sequence converges?
	
	Breaking down the given argument line by line helps us identify the specific items in the proof.
	It also helps to identify the areas of the proof which are likely to be suitable for online proof comprehension questions.
	Our process was to go through the proof understanding baseline checklist shown in Figure \ref{fig:pubc} and pick out specific components from the checklist which applied to the proof of Theorem \ref{th:bounded-inc-converge}.
	We then authored a comprehension question relating to each of the relevant components. 
	
    For example we may be interested in whether a student recognises the notation for the $\epsilon$-arguments and if they understand the difference between a result holding ``$\forall \: n$" and ``for $n \geq N$".
	In addition we may wish to test if a student knows the correct definition for the least upper bound $l$ or if a student can justify why line 2 in the proof holds.
	We can proceed in this way to build up a selection of potential proof comprehension questions that test the students' knowledge on different aspects of the proof.
	
	\subsection{Relevant Definitions and Modular Structure of the Proof}\label{subsec:lc}
	
	It seems natural to us to start by considering questions relating to the local aspects of the proof.
	Namely the meaning of terms and statements, the logical status of statements and proof framework, and the justification of claims.
	Perhaps the easiest questions to write are those which test the notation or definitions at play in the proof.
	In the proof of Theorem \ref{th:bounded-inc-converge} we have an increasing sequence, a bounded sequence and the least upper bound $b$.
	We also need to use the definition of convergence of a sequence to a limit.
	Each of these definitions readily leads to multiple choice questions which can be marked online.
	
	We can proceed to considering questions relating to the logical status of statements in the proof.
	Again, this provides a wide scope for proof comprehension questions.
	For Theorem \ref{th:bounded-inc-converge} we could ask the following example questions.
	\begin{enumerate}
		\item[] In which statements do we use the assumption that $(a_n)$ is increasing?
		\item[] In which statements do we use the assumption that $(a_n)$ is bounded?
	\end{enumerate}
	
	However, a teacher may not only be interested in testing students' understanding of the statements in the proof, but also interested in testing whether students can provide warrants in moving from one statement to the next.
		These warrants could be contained within the proof or could be external.
	For our proof we could ask the following questions.
	\begin{enumerate}
		\item[] Why does $S$ have an upper bound in statement 2?
		\item[] Why can we proceed from statement 4. to statement 5. in the proof?
	\end{enumerate}
	Once again, these questions may be turned into suitable MCQ's.
	Notice that the second sentence of the original proof starts ``Hence by the Completeness Axiom...'', whereas we have deliberately dropped this in our proof to provide an opportunity to ask students for the warrant for this step.
	
	We could also ask questions about steps in a proof which rely on a previous theorem or result.
	For example in Theorem \ref{th:bounded-inc-converge} we could ask what result we must appeal to in statement 2 in order to see that the set $S$ has a least upper bound?
	Note that questions of this sort are subtly different to those discussing warrants between steps.
	In this case we are asking the student to provide some backing for the step, following Toulmin's terminology \cite{Toulmin2003}.
	Whether questions of this sort are suitable is ultimately a practical teaching decision depending on the context in which the student is seeing the question.
	
	This proof makes use of the proof-gadget \(S\).
	The only way to apply the completeness axiom directly is via a set with an upper bound.
	In constructing \(S\) we turn a sequence into a set of values.
	The resulting upper bound turns out to be the limit of the sequence.
	We could ask students about the motivation for constructing the set \(S\) in this way.
	
	\subsection{Generating Examples and Counterexamples}\label{subsec:ep}
	
	Another potentially fruitful area for proof comprehension is to create questions associated with some of the global aspects of the proof.
	Asking students to illustrate concepts within the proof by providing examples is one such way to test understanding and limitations of the proof.
	
	Theorem \ref{th:bounded-inc-converge} and its proof make use of four definitions.
	We potentially have eight situations to consider all possible combinations of the three definitions in the hypotheses and conclusion.
	The initial goal is to either provide an example which satisfies the combination or to explain why such an example does not exist.
	This is shown in Table \ref{th:bounded-inc-converge-examples}.
	The first example, \(a_n=1-\frac{1}{n}\), is increasing and bounded, and so converges.
	This exemplifies the theorem.
	An example which is increasing and bounded, but does not converge would be a counter example to the theorem.
	Given our proof, such an example cannot occur but do all students understand this?
	If a sequence converges then it is bounded, so two rows in the table correspond to potential counter examples to this, separate theorem (note A).
	Hence these examples cannot exist.
	Do students understand the difference in status between a hypothesis which is necessary for the truth of the theorem (bounded), and a hypothesis which is used to enable the proof-gadget to ``work''?
    That is to say, the increasing assumption is used to show the supremum of the proof-gadget \(S\) is indeed the limit of the sequence.
	The remaining combinations, i.e.~other rows in the table, can be exemplified.
	
	\begin{table}[h!]
		\centering
		\begin{tabular}{ccc|l}
			{\bf Increasing ?} & {\bf Bounded ?} & {\bf Converges ?} & {\bf Example} \\
			\hline
			T         &      T    &    T       & Exemplify theorem: \(a_n=1-\frac{1}{n}\) \\
			T         &      T    &    F       & Counter example to theorem! \\
			T         &      F    &    T       & Note A. \\
			T         &      F    &    F       & \(a_n=n\) \\
			F         &      T    &    T       & \(a_n=1/n\) \\
			F         &      T    &    F       & \(a_n=(-1)^n\) \\
			F         &      F    &    T       & Note A. \\
			F         &      F    &    F       & \(a_n=(-n)^n\) \\
		\end{tabular}
		\caption{Example combinations for Theorem \ref{th:bounded-inc-converge}}
		\label{th:bounded-inc-converge-examples}
	\end{table}
	
	In a general theorem, examining the table of example combinations provides a potentially rich source of questions for students.
	Students can be asked to
	\begin{enumerate}
		\item identify which properties a particular example satisfies;
		\item provide examples satisfying certain properties;
		\item justify why certain examples do/do not exist.
	\end{enumerate}
	For general theorems this appears to be something which could be done in a potentially systematic way.
	
	\subsection{Application of Proof Techniques to a Different Context}
	
	Linked to the idea of illustrating by examples is the notion of taking techniques used in the proof and applying them to other situations.
	We note that this is somewhat different than applying the proof to other situations.
	For example in our proof we may wish to ask a student if they can use the $\epsilon$-arguments when applied to a different proof rather than asking for an example of an increasing, bounded sequence.
	Similarly, we could ask the following questions.
	
	\begin{enumerate}
		\item[] Would the proof still work if we considered a bounded, decreasing sequence? If not, why not?
		\item[] Could we write an alternative proof in order to prove that a bounded increasing sequence converges and, if so what are the relative merits of each proof?
		\item[] What is the statement of the contrapositive of the theorem? Are there other equivalent formulations?
	\end{enumerate}
	
	These ideas relate to the ability of students to understand the precise scope of the proof under consideration.
	Indeed, many proofs are introduced in undergraduate courses because the theorem which is proved is required for the use of several examples.
	This process often involves the application of the conclusion of the proof, rather than any technique used within the proof.
	In addition to applying the result of the theorem, we may also be interested to see if students can adapt the methods or structure of the proof itself in a practical way.
	For example, we may wish to assess if students can run through a general proof on a specific example or if students understand what happens to any proof-gadgets when the proof is applied to a specific example.
	We could also assess students' understanding of ways in which a given proof could break. That is, if we apply the proof to an example where the theorem is false, then where does the proof break?  E.g. in the classic proof that $\sqrt{2}$ is irrational, what goes wrong {\em in the proof} when we try to apply the proof to show $\sqrt{4}$ is irrational?
	In this manner, we can create potential comprehension questions around the global concerns of the proof.
	A simple proof comprehension asking five basic questions about the proof of Theorem \ref{th:bounded-inc-converge} is shown in Figure \ref{fig:Increase-bound-converge}.

	\begin{figure}
		\begin{center}
			\includegraphics[width=12cm]{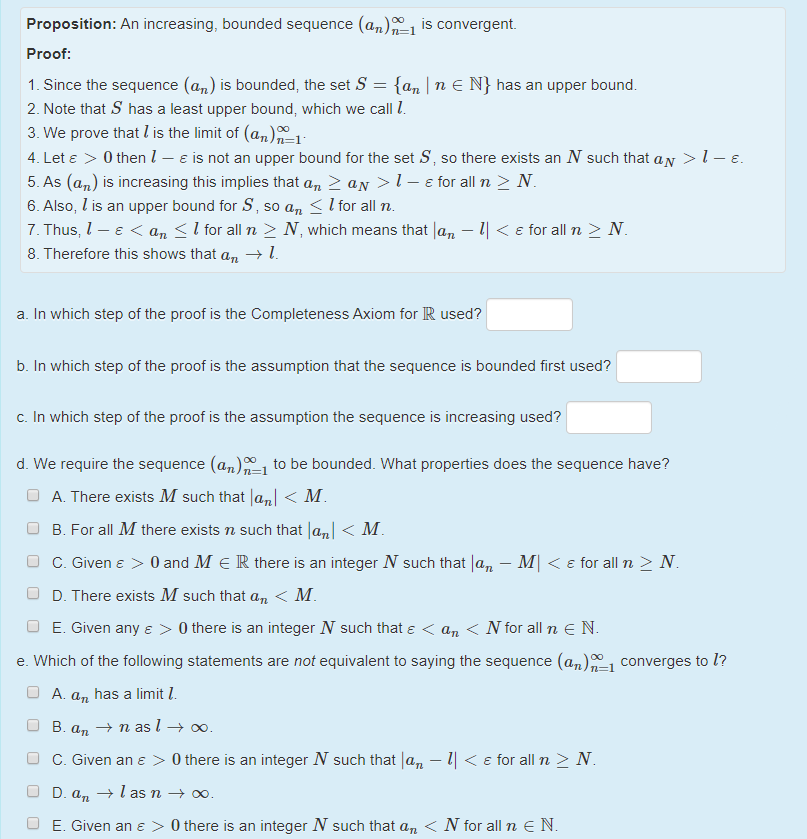}
			\caption{A proof comprehension question}\label{fig:Increase-bound-converge}
		\end{center}
	\end{figure}

	\subsection{Results}
	
	We asked the question shown in Figure \ref{fig:Increase-bound-converge} during semester 2 of the 2019-20 session, to a year 1 group of undergraduate students.
	The question was included in an online pre-lecture test as part of a flipped-classroom cycle.
	Students had one attempt at the question, and feedback was provided after the test had closed.
	We had 344 responses to the question, although not all students answered every part of the question.
	
	(a) In which step of the proof is the Completeness Axiom for \(\mathbb{R}\) used?
	This question was answered correctly (line 2) by 243 (70.64\%) and incorrectly by 95 (27.62\%).
	Common incorrect responses were line 1: 56 (16.57\%), or line 4: 23 (6.80\%), or line 5: 6 (1.78\%).
	Line 1 refers to the upper-bound of \(S\), as does the completeness axiom and so is understandable as a choice.
	However, we only {\em use} the completeness axiom when we apply the conclusion, i.e.~the existence of the least upper bound.
	Similarly, lines 4 and 6 also refer to upper-bounds suggesting some students are focusing on surface terminology.
	
	(b) In which step of the proof is the assumption that the sequence is bounded first used?
	This question was answered correctly (line 1) by 261 (75.87\%) and incorrectly by 78 (22.67\%).
	Complete results are as follows.
	\begin{center}
		\begin{tabular}{ll}
			Line & Responses \\
			1 & 261 (76.99\%)\\
			2 & 19 (5.60\%)\\
			3 & 14 (4.13\%)\\
			4 & 34 (10.03\%)\\
			6 & 11 (3.24\%)\\
		\end{tabular}
	\end{center}
	We do not have a hypothesis for the incorrect responses to this question, since boundedness of the {\em sequence} (rather than the set \(S\)) is not used anywhere else.
	
	(c) In which step of the proof is the assumption the sequence is increasing used?
	This question was answered correctly (line 5) by 303 (89.38\%) and incorrectly by 32  (9.30\%).
	Common incorrect answers were line 4: 18  (5.31\%) or line 3: 8 (2.36\%).
	We do not have a hypothesis for the incorrect responses to this question, since the hypothesis that the sequence is increasing is not used anywhere else.
	
	(d) We require the sequence \((a_n)_{n=1}^\infty\) to be bounded. What properties does the sequence have?
	\begin{itemize}
		\item[A.] There exists \(M\) such that \( |a_n| < M \).
		\item[B.] For all \(M\) there exists \( n \) such that \( |a_n|< M \).
		\item[C.] Given \( \varepsilon > 0\) and \( M \in \mathbb{R} \) there is an integer \(N\) such that \(|a_n -M| < \varepsilon\) for all \(n \geq N\).
		\item[D.] There exists \(M\) such that \(a_n < M\).
		\item[E.] Given any \(\varepsilon > 0\) there is an integer \(N\) such that \(\varepsilon < a_n < N\) for all \(n \in \mathbb{N}\).
	\end{itemize}
	Students were asked to choose all those which applied.  16 students failed to respond at all.
	Only 85 (24.71\%) of students chose the correct two responses (A and D).
	In addition to the 85 (24.71\%) correct selections, students chose the following.
	\begin{center}
		\begin{tabular}{llll}
			Choice  & Correct? & Selected & Not selected.\\
			A       & Y        & 133 (38.66\%) & 109 (31.69\%)\\
			B       &          & 43  (12.50\%) & 199 (57.85\%)\\
			C       &          & 197 (57.27\%) & 45  (13.08\%)\\
			D       & Y        & 147 (42.73\%) & 95  (27.62\%)\\
			E       &          & 63  (18.31\%) & 179 ( 52.03\%)
		\end{tabular}
	\end{center}
	Hence, only 63.37\% of students correctly selected A, and only 67.44\% of students correctly selected D.
	C looks plausible, but potentially confuses bounded with convergence (although C is deliberately not the correct definition of convergence).  A majority of students chose C.
	
	(e) Which of the following statements are {\em not} equivalent to saying the sequence \((a_n)_{n=1}^\infty\) converges to \(l\)?
	\begin{itemize}
		\item[A.] \(a_n\) has a limit \(l\).
		\item[B.] \(a_n \to n\) as \(l \to \infty\).
		\item[C.] Given an \(\varepsilon > 0\) there is an integer \(N\) such that \(|a_n -l| < \varepsilon\) for all \(n \geq N\).
		\item[D.] \(a_n \to l\) as \(n \to \infty\).
		\item[E.] Given an \(\varepsilon > 0\) there is an integer \(N\) such that \(a_n < N\) for all \(n \in \mathbb{N}\).
	\end{itemize}
	Students were asked to choose all those which applied.  14 students failed to respond at all.
	In this case, 208 (60.47\%) of students chose the correct two responses (B and E).
	In addition to the 208 (60.47\%) correct selections, students chose the following.
	\begin{center}
		\begin{tabular}{llll}
			Choice  & Correct? & Selected & Not selected.\\
			A       &          & 52  (15.12\%) & 70  (20.35\%)\\
			B       & Y        & 75  (21.80\%) & 47  (13.66\%)\\
			C       &          & 61  (17.73\%) & 61  (17.73\%)\\
			D       &          & 57  (16.57\%) & 65  (18.90\%)\\
			E       & Y        & 35  (10.17\%) & 87  (25.29\%)
		\end{tabular}
	\end{center}
	Hence, 82.27\% of students correctly selected B, and only 70.64\% of students correctly selected E.
	
	We expect pre-lecture test questions to have high success rates, and in many cases our instincts appear to have been validated by the response rates of students.
	These questions were easy, but still, not all students got the correct answers.
	In the case of the question shown in Figure \ref{fig:Increase-bound-converge} one criticism from a colleague we feel is somewhat legitimate is that a ``test-wise'' student could use their knowledge of ``the game'' to guess answers.
	For example, the correct answer to part (b) (line 1) is merely the first time the word ``bounded'' is used in the proof.
	All assessment formats (MCQ, open-ended, reading comprehension) have their own strengths/weaknesses, and some formats have unintended format effects.
	These results are an honest report of the question in Figure \ref{fig:Increase-bound-converge} which is just one of our relatively early attempts at developing a sequence of proof comprehension questions.
	No doubt it can be improved.
	In the next section we discuss general issues raised by our attempts at developing proof comprehension questions.
	
	\section{Discussion}

	We wish to understand to what extent our proof comprehension exercises have been worthwhile: to what extent have they led to an greater understanding of proof in our students?
	An examination of the students' most common responses to each question allows us to identify the most common mistakes, and to identify any floor/ceiling effects with impossible/trivial questions.
	A more thorough long-term evaluation has been disrupted by COVID-19 during March 2020.
	Given the sudden increase in the use of online learning and assessments as a response to this global health crisis, and our cautious belief of the modest contribution practical proof comprehension has to play in this, we have chosen to publish these preliminary findings how, rather than wait a full year for experimental data from a more controlled setting.
	
	The STACK online assessment system we are using allows a question setter to provide immediate tailored feedback to a student, based on predefined rules.
	However, the question writer must predict common student errors and provide suitable feedback for each of these.
	We have been highly pragmatic in not trying to second-guess too many possible incorrect responses, but rather to examine actual student responses.
	For example, for part (a) in the question in Figure \ref{fig:Increase-bound-converge}, $16.57\%$ of students incorrectly believed the completeness axiom was used in line 1.
	Therefore we latter added the feedback ``Both line 1 and the completeness axiom refer to the upper bound for $S$ however we only use the completeness axiom in line 2 to find the least upper bound.''
	This gives the students who answered line 1, specific feedback correcting their error after the test, and this feedback will be in place for subsequent years.
	We can repeat this procedure with all common errors for all questions after the materials have been used.
	We accept there is a delay in developing this feedback for students who may not (this year) go back to review their responses.
	
	\begin{figure}[h!]
		\begin{center}
			\includegraphics[width=7cm]{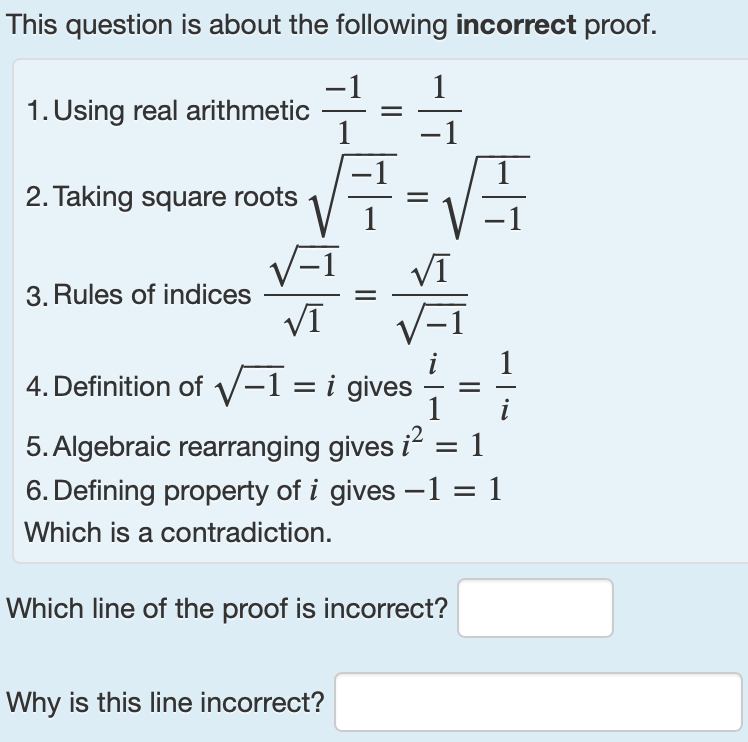}
			\caption{A proof fallacy question}\label{fig:pcw5a}
		\end{center}
	\end{figure}
	
	In some questions we asked students to give their reasoning as free-form text. Free-form text answers are not automatically assessed and these responses were not assigned any marks in the quiz.
    For example in the question shown in Figure \ref{fig:pcw5a} students were asked for the statement in the proof that contained an error.
	This was the answer that was automatically assessed, however the students were also asked to explain their reasoning and this explanation was not marked automatically.
	Often, students would enter notes that we did not anticipate.
	For example, when completing the question in Figure \ref{fig:pcw5a} one response was that the proof fails because ``You cannot take the square root of a negative number." 
	We altered our feedback to this question as a result for students who chose line 2.
	In many cases students had the correct answer but gave incorrect reasoning or vice versa. 
	For this question the most common response was the correct answer (line 3), selected by 47.68\% of students. 
	However other common responses were line 4 (22.60\%) and line 2 (16.41\%), suggesting students were having difficulty in correctly identifying the error in the proof.
	The main difficulty encountered by the students here appeared to be the consequences of defining $i^2 = 1$ rather than defining $i = \sqrt{-1}i$.
	Taking the comments in addition to the common question responses allowed for a more accurate identification of the precise areas of the proof where students appear to be struggling.
	This helped us to develop more accurate feedback, addressing the common errors for this question. 
	By applying this process to each question we can provide tailored feedback which can be supplemented each time the question materials are used.
	Based on our experiences of gathering free-form answers we are confident that we could gather students' responses in year 1, and from these develop multiple choice questions for subsequent student cohorts.
	Students in year 1 would not get any benefit from feedback from the automatically scored MCQ, and either we would have to accept that or have a human assess students' justifications provided in year 1.

    To what extent does the proof style inform the types of comprehension question that can be developed from it?
	In \cite{Ording2019} various styles of proof writing are discussed in a general way.
	This is of interest here because, for the purposes of reading comprehension, it is possible to alter the style of a given proof in order to increase the viability of developing comprehension questions.
	For example, the induction proof in Figure \ref{fig:induction2} uses subtle colour and style which is intended to highlight structural components of the induction proof, providing cues to the student.
	This style has the potential to highlight the modular structure of a proof which may be otherwise obscured. 

	When re-writing the proof of Theorem \ref{th:bounded-inc-converge} using numbered steps, we purposely omitted the explicit reference to the Completeness Axiom in statement 2, between the proof provided by \cite{Liebeck2000} and our proof with numbered lines.
	By consciously choosing to omit a warrant we can then ask students to provide this information as an answer to a comprehension question.
	See Figure \ref{fig:Increase-bound-converge}, statement 2 and question (a).
	In a similar way, it is possible to purposely omit particular algebraic expressions in a proof for the purpose of asking students to provide that information by filling in gaps.
	We note the subtle difference between deliberately obfuscating logical steps of a proof and obfuscating calculations within a proof.
	Another possible style choice would be to provide the proof to the student in a two column format and to obscure some of the working or the justification steps we wish students to provide.

	The complex fallacy question shown in Figure \ref{fig:pcw5a} could ask students to provide a warrant for each line, and to identify any lines which are incorrect.
	This has the advantage that we do not deliberately make the proof more difficult to understand.
	In addition, this style allows for the precise identification of the warrant we wish students to provide.
	While the self-explanation training of \cite{Hodds2014} has proved to be effective, explicitly asking students specific questions about a proof might (i) encourage them to ask such questions about any proof they encounter, and (ii) where carefully designed might prompt students to consider aspects of the proof they might otherwise not consider.

	We wish to insert a small note of caution at this point.
	One unexpected difficulty encountered during the development of our proof comprehension questions was that we found in certain cases, subtleties in either the structure or reasoning of a proof which were not necessarily straightforward for us to follow.
	This was despite the fact many of these proofs are well known arguments.
	We hypothesise that this was a result of our very familiarity with these proofs.
	Over time we have become trained to subconsciously internalise some of their structure as we have gained expertise.
	This can have the effect that we under-appreciate some of the subtleties of an argument.
	Appropriate proof comprehension development therefore requires the ``rediscovery" of every level of the proof.
	
	What has perhaps surprised us the most in developing and trialing our questions is the lack of floor/ceiling effects, even with apparently trivial questions.
	Despite teaching high-achieving students on a mathematics degree at a leading university, many of our students are not able to identify lines in a proof correctly.
	The inability of many students to correctly identify lines of a proof strongly suggests the value of using such questions to gauge students' understanding.

	\section{Conclusion}

	Developing proof comprehension tasks has allowed us to test students' understanding of specific aspects of important proofs. 
	In addition, it has helped us to more accurately identify the specific areas of each proof where students appear to be struggling. 
	Once we had developed, and gained confidence in using, the proof understanding baseline checklist shown in Figure \ref{fig:pubc} we have been able to readily and efficiently produce adequate proof comprehension question sequences on a week-by-week basis. By testing each appropriate item of the checklist, we were able to create at least one suitable proof comprehension question sequence per week. 
	The self-imposed requirement that all of the questions we developed were suitable for marking online, and the lack of resources to develop high-quality MCQ was a restriction on what we could do.
	This has been significantly simpler than the gold-standard process described in \cite{Mejia-Ramos2017}.

	Each week we have been able to review the results of each question. Overall there was a lack of floor/ceiling effects in most of our questions. 
	Based on preliminary examination of students' attempts we believe these questions have been acceptable, or better, tests of important aspects of students' understanding of these proofs.
	
	At times developing proof comprehension questions has been a very enjoyable process, which has deepened our understanding of the proofs.
	At other times it has proved rather frustrating, with a growing confusion over some proofs as currently written in the core text.
	We suspect a more comprehensive examination of standard proofs through the attempt to apply the proof understanding baseline checklist can only lead to improvements (indeed perhaps correction of errors/omissions) in the proofs we provide to students.
	However, we have been able to write a variety of comprehension questions, including reading comprehension, fading questions and generating examples questions.
	Using each of these techniques, we have been able to test a range of different aspects of proof understanding.
	We hope that the methods described here can aid university teachers to produce comprehension tasks across a wide variety of subjects within mathematics, and that such questions are a genuine aid to developing students' understanding of proof itself.
	
	\bibliography{education,PUS,sangwin}
	
	\end{document}